\documentclass{article}
\usepackage{graphicx} 
\usepackage[affil-it]{authblk}
\usepackage[utf8]{inputenc}
\usepackage[english]{babel}
\usepackage{titlesec}
\usepackage[margin=0.5in]{geometry}
\usepackage{mathtools}
\usepackage{enumerate}
\usepackage{enumitem}
\usepackage{lineno} 
\usepackage{adjustbox}
\usepackage{multirow}
\usepackage{chngcntr}
\usepackage{titlesec}
\usepackage[utf8]{inputenc}
\usepackage{mathrsfs}
\usepackage{lscape}
\usepackage{cite}
\usepackage{cite}

\usepackage{enumitem}
\usepackage{subfigure}
\usepackage{subcaption}

\usepackage[hyphens]{url}
\usepackage{hyperref}
\usepackage{float}
\usepackage[section]{placeins}
\usepackage[dvipsnames]{xcolor}
\usepackage{soul}
\usepackage{amsmath}
\usepackage[labelfont=bf,font=small]{caption}
\usepackage{bm}
\usepackage[title]{appendix}
\usepackage{graphicx}
\usepackage{tabularx}
\usepackage{float}
\usepackage{mathtools}
\usepackage{amsmath,amssymb,amsthm}
\usepackage{oubraces}
\numberwithin{equation}{section}
\usepackage{chngcntr}
\usepackage{mathrsfs}
\usepackage{multirow}
\usepackage{chngcntr}
\usepackage{parskip}
\usepackage{accents}
\usepackage{soul}
\usepackage{orcidlink}

\usepackage{cleveref}

\usepackage[normalem]{ulem}

 \usepackage{changepage}

\usepackage[dvipsnames]{xcolor}   

\definecolor{dgn}{rgb}{0.0, 0.5, 0.0}

\newcommand{\allblack}{\color{black}{}}

\newtheorem{theorem}{Theorem}
\newtheorem{prop}{Proposition}

\usepackage{lipsum}
\makeatletter
\g@addto@macro{\endabstract}{\@setabstract}
\newcommand{\authorfootnotes}{\renewcommand\thefootnote{\@fnsymbol\c@footnote}}%
\makeatother

\begin{document}
\title{Dead Zones Enable Flexible Phase Organization in Coupled Oscillators}
\author{Naghmeh Akhavan \orcidlink{0000-0002-9474-4486}$^{a}$, Ruby Kim \orcidlink{0000-0003-0209-4875}$^{b}$\\
{\it {\small $^{a}$  Department of Mathematics, University of Michigan, Ann Arbor, MI}}

 {\it {\small $^{b}$  Department of Anesthesiology, University of Michigan, Ann Arbor, MI}}


\vspace{-1.5cm}
    }
  \date{}
\maketitle
\vspace{1cm}
\begin{center}
    \textbf{Abstract}
\end{center}
\noindent 
Coupled oscillator networks underlie many biological systems, from cardiac cycles to circadian rhythms. Phase-reduced models such as the Kuramoto model have been widely used to study synchronization, but {they typically} assume {that} oscillators {remain} continuously responsive to inputs
{and often produce}
tightly clustered phase distributions. Biological oscillators, however, 
{commonly}
exhibit phase intervals during which inputs have little or no effect, called ``dead zones." Here, we extend the Kuramoto model 
{by introducing}
receiver-gated dead zones, in which oscillators transiently ignore incoming signals. Using analytical and numerical approaches, we show that dead zones can reduce synchronization rates,
modulate the distribution of phase-locked solutions, 
and
{modify the}
stability of phase-locked states. For identical oscillators, 
{full synchrony remains locally exponentially stable and numerically dominant}, although convergence times 
{depend} sensitively {on the dead-zone width.} 
For heterogeneous oscillators 
{that admit}
phase-locked solutions when the coupling strength {satisfies} $K>K^*$, dead zones broaden the long-term phase distributions. 
Numerical 
{exploration across} 
dead zone widths {reveals a transition in which steady phase gaps lose stability,}
resulting in dead zone-induced phase drifting. Overall, these results identify phase-response dead zones as a biologically plausible mechanism for flexible phase organization beyond classical Kuramoto dynamics.
\section{Introduction}

Populations of coupled oscillators are ubiquitous in biology, {arising in systems ranging from circadian clocks and cell-cycle regulation to developmental and gene-regulatory dynamics~\cite{heltberg2021tale, jimenez2022principles}}.
Networks of units, each with its own intrinsic dynamics and coupled to one {another}
, collectively regulate essential physiological processes such as cardiac rhythms, hormonal cycles, and circadian ($\sim$24-hour) timing. Despite substantial variability at the level of individual units, these systems often exhibit coherent macroscopic behavior. Understanding how such collective signals emerge from heterogeneous, interacting components is both mathematically interesting and biologically important. 

One example of such a system is the suprachiasmatic nucleus (SCN) in the mammalian brain, which contains a population of circadian neurons that receive light input from the retina, coordinate with each other, and relay timing signals to other brain regions \cite{hastings2018generation,brown2009spatiotemporal}. Each circadian neuron has its own molecular clock, a transcription-translation feedback loop (TTFL), that drives nearly 24-hour periodic rhythms in gene expression \cite{hurley2016circadian}. While detailed mathematical models have been developed to describe intracellular circadian dynamics   \cite{asgari2019mathematical,tyson2008biological,gonze2011modeling}, there has been less of a focus on intercellular coupling and population heterogeneity, which are essential for robust timekeeping at the tissue level \cite{collins2020circadian,dewoskin2014not}. 

Phase-reduced models provide a natural framework for studying these dynamics. In particular, Kuramoto-type models, which represent each oscillator by its phase and interaction strength, have been widely used to conceptualize network-level interactions and derive low-dimensional descriptions of macroscopic behavior \cite{acebron2005kuramoto,hannay2019macroscopic}. However, a key assumption underlying classical phase models is that oscillators are continuously responsive to inputs from others. As a consequence, {in regimes of sufficiently strong attractive coupling, }classical Kuramoto-type systems 
{often}
exhibit {highly coherent behavior,}
with {oscillator phases remaining relatively}
tightly clustered. 

In reality, biological oscillators often exhibit strongly phase-dependent responsiveness \cite{johnson1992phase}. Notably, many experimentally observed phase response behaviors contain ``dead zones" during which inputs have little or no effect \cite{lewy2010clinical}, which are hypothesized to modulate entrainment to external cues \cite{pfeuty2011robust}. {For instance, light pulses typically phase-shift circadian rhythms in rodents, except during dead zone intervals when there is no phase response \cite{daan1976functional}. A dead zone can also be observed in the efficacy of melatonin, where melatonin administered during the dead zone has little to no effect on circadian phase \cite{cipolla2018melatonin}. Such features may arise naturally from biological mechanisms including saturation in biochemical signaling pathways \cite{uriu2019saturated,best2024mathematical} and changes in the electrical responsiveness of neurons \cite{komal2026iprgc}.} Despite extensive study of phase response curves (PRCs) at the single-oscillator level, their implications for network-level behavior remain poorly understood \cite{smeal2010phase}.

In this study, we modify the Kuramoto model by introducing a receiver-gated dead zone for each oscillator. In this formulation, oscillators transiently ignore incoming signals when their phase lies within prescribed intervals representing biological dead zones. We show that this modification meaningfully alters the collective dynamics, slowing the approach to synchronization and promoting phase dispersion. As a result, we identify phase response dead zones as a plausible mechanism for flexible phase organization, enabling broad phase distributions consistent with biological systems.

{More specifically, for identical oscillators, we show that the receiver-gated model retains a Lyapunov structure and that full synchrony remains locally exponentially stable, although the rate of convergence depends sensitively on dead-zone width. 
For heterogeneous oscillators, we derive the corresponding modification to the locked-frequency relation induced by receiver-side gating. 
Finally, through numerical simulations, we show that dead zones can alter the critical coupling threshold and induce phase drifting.}




\section{Receiver-gated Kuramoto model}\label{sec:receiver_gated_model}
In this section, we introduce a modified Kuramoto model of interacting biological oscillators in which each oscillator's response to others' phases depends on where it is in its cycle. 
This receiver-gated formulation is motivated by 
experimental PRCs which often show intervals of weak or negligible responsiveness, commonly called dead zones \cite{lewy2010clinical,pfeuty2011robust}.
To capture this feature at the network level, we allow each oscillator's receptivity to incoming signals to vary through a phase-dependent gate. 

\subsection{Model formulation}
We consider a population of $N$ coupled phase oscillators with phases $\theta_i(t) \in \mathbb T = \mathbb R/2\pi \mathbb Z$, for $i=1, \dots, N$. 
The receiver-gated Kuramoto model is given by
\begin{align}\label{eq:receiver_gated_general}
    \dot \theta_i = \omega_i + \frac{K}{N} S(\theta_i) \sum_{j=1}^N \sin(\theta_j - \theta_i), \qquad i=1, \dots, N, 
\end{align}
where $\omega_i$ is the natural frequency of oscillator $i$, $K>0$ is the coupling strength, and $S: \mathbb T \to [0,1]$ is a $2\pi$-periodic receiver gate. 
The factor $S(\theta_i)$ determines how strongly oscillator $i$ responds to the collective influence of the population at its current phase. 

When $S(\theta_i) \approx 1$, oscillator $i$ is highly receptive to the population and responds almost as in the classical Kuramoto model. 
When $S(\theta_i) \approx 0$, its sensitivity is strongly suppressed so its motion is driven by its intrinsic frequency. 
Accordingly, the gate acts as a phase-dependent receptivity function and represents the idea that an oscillator may temporarily ignore the population {activity} when it lies in an insensitive phase interval. 


The interaction kernel in \Cref{eq:receiver_gated_general} is the classical sine coupling, which is odd (i.e., $\sin(-\phi)=-\sin(\phi)$) and $2\pi$-periodic.
This form of coupling promotes alignment of phase when the coupling is attractive ($K>0$). 
If $S(\theta) \equiv 1$, then \eqref{eq:receiver_gated_general} reduces to the standard all-to-all Kuramoto model 
\begin{align}\label{eq:classical_kuramoto}
    \dot \theta_i = \omega_i + \frac{K}{N} \sum_{j=1}^N \sin(\theta_j - \theta_i), \qquad i=1, \dots, N,
\end{align}
where $\theta_j>\theta_i$ causes $\theta_i$ to speed up towards $\theta_j$ and $\theta_i>\theta_j$ causes $\theta_i$ to slow down.
Thus, the receiver-gated system in \Cref{eq:receiver_gated_general} can be viewed as a state-dependent extension of the classical Kuramoto model in which the effective coupling is modulated by the phase of the receiving oscillator. 

\subsection{Dead-zone gate definition and properties}

To model an interval of reduced sensitivity, we define the receiver gate using a smooth dead-zone profile. 
Let $\theta_0 \in \mathbb T$ be the center of the dead zone, let $w \in (0, 2\pi)$ denote its width, and let $k>0$ control the sharpness of the transition at the edges. We define gating using a double sigmoid function
\begin{align}\label{eq:GD_def}
    S(\theta)
  = 1-
  \sigma\Big(k\left(d(\theta,\theta_0)+\tfrac{w}{2}\right)\Big)
  \sigma\Big(k\left(-d(\theta,\theta_0)+\tfrac{w}{2}\right)\Big),
\end{align}
where 
\begin{align}
    \sigma(x) = \frac{1}{1+e^{-x}},
\end{align}
is the logistic function and 
\begin{align}\label{eq:wrapped_distance}
    d(\theta,\theta_0)=\operatorname{atan2}(\sin(\theta-\theta_0), \cos(\theta-\theta_0))\in(-\pi,\pi]
\end{align}
{denotes the wrapped angular distance from \(\theta_0\) to \(\theta\), that is, the shortest signed displacement on the circle. Here \(\operatorname{atan2}(y,x)\) returns the angle of the vector \((x,y)\) in \((-\pi,\pi]\), which ensures that \(d(\theta,\theta_0)\) correctly accounts for periodicity on \(\mathbb T\).}

{This form of the receiver gate is chosen to balance biological interpretability with mathematical and numerical convenience. The function \(S\) is smooth, which facilitates analytical arguments such as linearization and local stability analysis, while for every finite \(k\) it remains strictly between 0 and 1, avoiding discontinuities in the vector field. Moreover, \eqref{eq:GD_def} provides a smooth approximation of an on--off gate: when \(|d(\theta,\theta_0)|<w/2\), the phase \(\theta\) lies inside the dead zone, the product of the two logistic factors is close to one, and \(S(\theta)\) is therefore close to zero; outside this interval, at least one logistic factor is small, so \(S(\theta)\) is close to one. 
In the formal limit \(k\to\infty\), the gate approaches a piecewise constant dead-zone profile.}


To characterize synchronization throughout the paper, we use the standard Kuramoto order parameter
\begin{align}\label{eq:order_parameter_identical}
R(t)e^{i\psi(t)} = \frac{1}{N}\sum_{j=1}^N e^{i\theta_j(t)},
\end{align}
where $R(t) \in [0,1]$ measures the degree of phase coherence. 
The value $R(t)=1$ corresponds to complete synchrony while smaller values indicate widely distributed phases around the circle \cite{acebron2005kuramoto,strogatz2000kuramoto}.




\subsection{Receiver-gated interaction structure}
\Cref{fig:1} summarizes the main features of the receiver-gated interaction and provides a first comparison with the classical Kuramoto model.  All parameter values used throughout the paper are provided in \Cref{table:S1}.
Panel (A) shows a one-dimensional slice of the receiver-gated interaction term, showing how the dead zone suppresses coupling over part of the cycle while preserving the phase-attractive structure outside that interval. 
Panel (B) shows the full interaction term $S(\theta_i)\sin \phi$ as a function of the receiver phase $\theta_i$ and the phase difference $\phi = \theta_j - \theta_i$. 
For receiver phases inside the dead zone, the interaction is strongly attenuated, whereas outside the dead zone the coupling resembles the classical sine interaction. 
\begin{figure}[ht]
    \centering
    \includegraphics[width=1\linewidth]{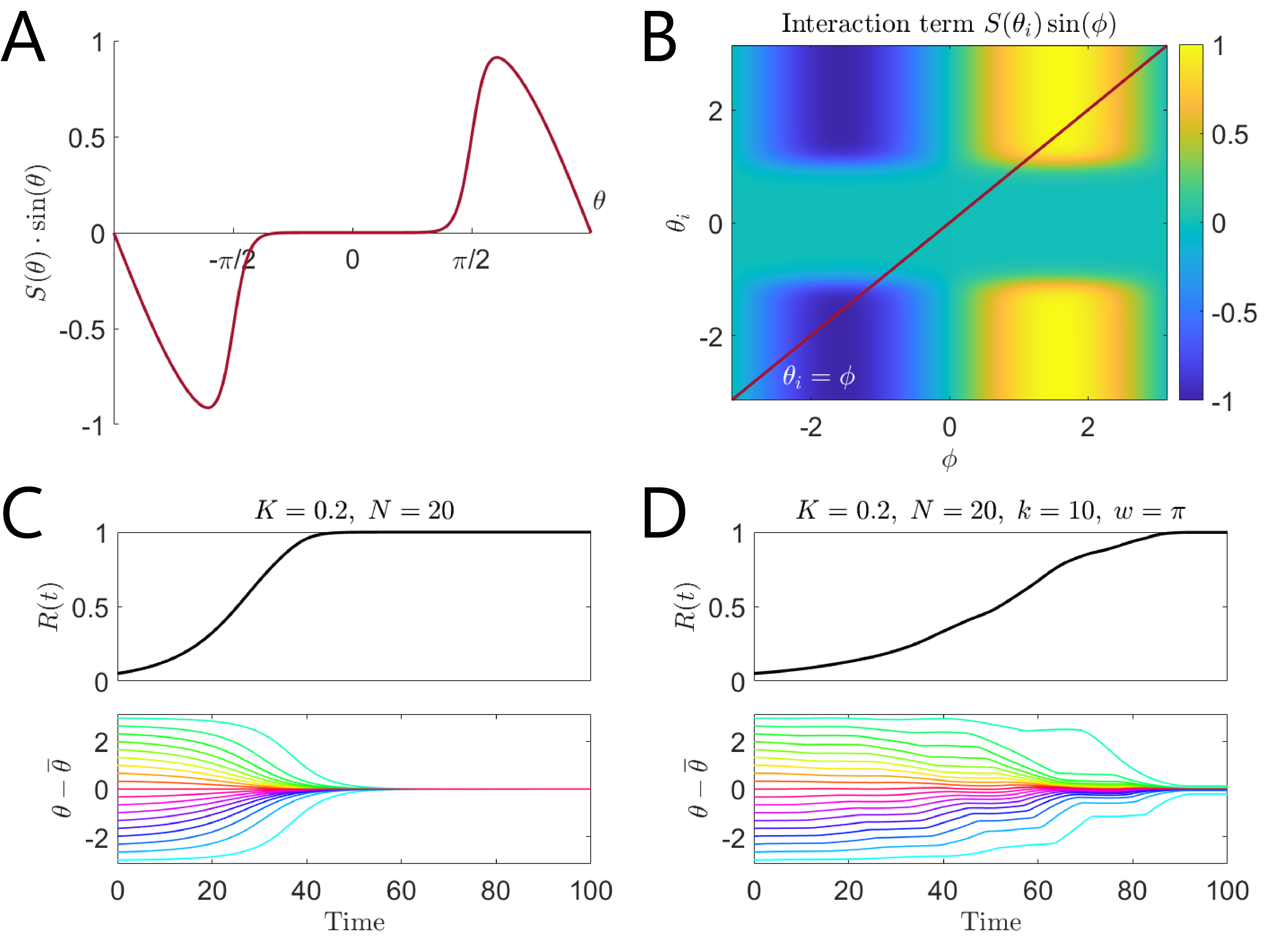}
    \caption{{\bf Receiver-gated coupling.} (A) Slice of receiver-gated coupling function for illustrative purposes. (B) Heatmap of coupling from oscillator $j$ to oscillator $i$ dependent on $\theta_i$ and $\phi=\theta_j-\theta_i$. (C) Model simulations with $N=20$, coupling strength $K=0.2$, and identical frequencies $\omega=2\pi/24$. $R(t)$ is the order parameter and for each oscillator, $\theta - \bar\theta$ represents the phase difference from the mean phase. Initial conditions were uniformly distributed from $0$ to $2\pi$. (D) Model simulations for receiver-gated coupling with dead zone width $w=\pi$ and $k=10$. As in panel C, $K=0.2$ and $N=20$.}
    \label{fig:1}
\end{figure}
Panels (C) and (D) display representative simulations for identical circadian oscillators with a natural period of 24 hours. 
In Panel (C) the classical Kuramoto model exhibits the expected rapid collapse of the phase differences toward synchrony, accompanied by a fast increase of the order parameter $R(t)$ toward one. 
In Panel (D), the receiver-gated model still converges to synchrony but the transient dynamics are visibly slower. 
Phase differences persist longer before collapsing and the growth of $R(t)$ is delayed relative to the ungated case. 
This behavior is consistent with the interpretation of the dead zone as a temporary reduction in receptivity. When oscillators pass through insensitive phases the effective coupling is weakened and the approach to synchrony slows accordingly. 

Based on numerical exploration, we hypothesized that the receiver gating does not destroy the tendency toward synchronization in the identical oscillator setting but rather reshapes the transient dynamics by introducing phase dependent windows of reduced interaction. 
We investigate this behavior in the following analysis in \Cref{sec:identical_analysis}.

\section{Identical oscillators: synchronization analysis}\label{sec:identical_analysis}
Here, we analyze the case of identical oscillators where all natural frequencies are identical. 
Considering the system on a rotating frame, we may assume without loss of generality that $\omega_i \equiv 0$, for $i=1, \dots, N$. 
The receiver-gated Kuramoto model then takes the form 
\begin{align}\label{eq:identical_receiver_model}
\dot{\theta}_i = \frac{K}{N}S(\theta_i)\sum_{j=1}^N \sin(\theta_j-\theta_i),
\qquad i=1,\dots,N.
\end{align}
In this setting, complete synchrony corresponds to a common phase shared by all oscillators, and the main question is whether the dead-zone gate alters the classical tendency toward synchronization. 
Below, we show that the identical frequency system retains a gradient-like structure where the synchronization manifold remains the distinguished stable state, while the receiver gate mainly affects the rate at which trajectories approach it. 

\subsection{Lyapunov analysis}
A useful starting point is the classical Kuramoto potential 
\begin{align}\label{eq:kuramoto_potential}
U(\theta) := -\frac{1}{2N}\sum_{i=1}^N\sum_{j=1}^N \cos(\theta_i-\theta_j),
\end{align}
defined on the $N$-torus $\mathbb T^N$. 
This function measures phase dispersion in the network and is minimized along the synchronized manifold. 

The key observation is that the receiver gated system still admits a Lyapunov description with respect to $U$. 
More precisely, the dynamics can be written as a state-dependent gradient flow in which the gate rescales the dissipation associated with each oscillator. 

\begin{theorem}
    For the identical oscillator receiver gated system \eqref{eq:identical_receiver_model}, the potential $U$ satisfies
    \begin{align}\label{eq:Udot_identical}
\dot{U}(\theta(t))
=
-K\sum_{i=1}^N S(\theta_i)
\left(\frac{\partial U}{\partial \theta_i}\right)^2
\leq 0.
\end{align}
Consequently, every trajectory converges to the largest invariant set contained in 
\begin{align}\label{eq:invariant_set_identical}
\mathcal{E}
:=
\left\{
\theta\in\mathbb{T}^N
\;\middle|\;
S(\theta_i)\frac{\partial U}{\partial \theta_i}=0
\text{ for all } i=1,\dots,N
\right\}.
\end{align}
\end{theorem}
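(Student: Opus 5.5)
Compute the gradient of $U$ and show that the vector field in \eqref{eq:identical_receiver_model} is a gated multiple of $-\nabla U$. Then the derivative formula is a chain-rule computation, and convergence follows from LaSalle's invariance principle on the compact manifold $\mathbb T^N$.

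\textbf{Step 1: the gradient of $U$.} Differentiate \eqref{eq:kuramoto_potential} with respect to $\theta_i$. The index $i$ appears both as the first and as the second index of the double sum, and the two contributions are equal by symmetry of the cosine. The factor $\tfrac{1}{2}$ cancels, giving
\begin{align*}
\frac{\partial U}{\partial \theta_i} = \frac{1}{N}\sum_{j=1}^N \sin(\theta_i-\theta_j) = -\frac{1}{N}\sum_{j=1}^N \sin(\theta_j-\theta_i).
\end{align*}
This is the one place to be careful with bookkeeping: the diagonal term $j=i$ contributes zero, and one must check that the factor $2$ from the symmetry exactly cancels the $\tfrac{1}{2N}$.

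\textbf{Step 2: rewrite the dynamics and differentiate $U$.} Substituting Step 1 into \eqref{eq:identical_receiver_model} gives $\dot\theta_i = -K S(\theta_i)\,\partial U/\partial\theta_i$, a gradient flow with state-dependent diagonal mobility $K\,\mathrm{diag}(S(\theta_i))$. By the chain rule,
\begin{align*}
\dot U = \sum_i \frac{\partial U}{\partial\theta_i}\,\dot\theta_i = -K\sum_i S(\theta_i)\Big(\frac{\partial U}{\partial\theta_i}\Big)^2,
\end{align*}
which is \eqref{eq:Udot_identical}. The sign $\dot U \le 0$ holds because $K>0$ and $S\ge 0$.

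\textbf{Step 3: LaSalle's invariance principle.} The ingredients are as follows:
\begin{itemize}
\item $U$ is smooth, hence continuous, on $\mathbb T^N$.
\item The vector field is smooth: $S$ is smooth, since $d(\theta,\theta_0)$ is smooth away from the antipodal point $\theta=\theta_0+\pi$.
\item $\mathbb T^N$ is compact, so every forward orbit is precompact and solutions exist globally.
\end{itemize}
LaSalle then says every trajectory converges to the largest invariant subset of $\{\dot U=0\}$.

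Since every term in \eqref{eq:Udot_identical} is nonnegative, $\dot U=0$ holds exactly when $S(\theta_i)(\partial_i U)^2=0$ for every $i$. Because $S\ge 0$, this is equivalent to $S(\theta_i)\,\partial_i U=0$ for every $i$, which is the set $\mathcal E$ of \eqref{eq:invariant_set_identical}.

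\textbf{Main obstacle: smoothness of $S$ at the antipode.} The wrapped distance $d$ jumps from $\pi$ to $-\pi$ at $\theta=\theta_0+\pi$. The map $d\mapsto S$ is symmetric, since the product of the two sigmoids is even in $d$, so $S$ is at least continuous there. However, its derivatives match only approximately: the sigmoid factor is exponentially small but nonzero at $|d|=\pi$. This does not affect the argument. LaSalle's principle needs only a continuous vector field with forward-invariant compact domain and $U\in C^1$, plus continuity of $\dot U$ along solutions. If uniqueness is in doubt, I would instead invoke the version of LaSalle for continuous semiflows, or note that $S$ is Lipschitz because $d$ is piecewise smooth with matching values. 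This gives local Lipschitz continuity of the vector field, hence uniqueness, and the argument goes through.
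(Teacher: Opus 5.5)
Your proposal is correct and follows the paper's proof essentially step for step. Like the paper, you compute $\partial U/\partial\theta_i$, rewrite the dynamics as the gated gradient flow $\dot\theta_i=-K\,S(\theta_i)\,\partial U/\partial\theta_i$, differentiate $U$ along trajectories by the chain rule, and apply LaSalle's invariance principle on the compact torus $\mathbb{T}^N$.

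Your regularity remark goes beyond the paper, which simply asserts that $S$ is smooth. The dead-zone factor is even in $d$ but has nonzero slope at $d=\pm\pi$, so $S$ is continuous with a kink at $\theta_0+\pi$. Your observation that Lipschitz continuity of the vector field already gives uniqueness and a continuous flow is what makes the LaSalle step rigorous.
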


\begin{proof}
    Differentiating \eqref{eq:kuramoto_potential} with respect to $\theta_i$ gives 
    \begin{align}\label{eq:dU_dtheta}
\frac{\partial U}{\partial \theta_i}
=
\frac{1}{N}\sum_{j=1}^N \sin(\theta_i-\theta_j).
\end{align}
Since $\sin(\theta_j - \theta_i) = -\sin(\theta_i - \theta_j)$, the dynamics \eqref{eq:identical_receiver_model} can be rewritten as  
\begin{align}\label{eq:gradient_form_identical}
\dot{\theta}_i
=
\frac{K}{N}S(\theta_i)\sum_{j=1}^N \sin(\theta_j-\theta_i)
=
-K\,S(\theta_i)\frac{\partial U}{\partial \theta_i}.
\end{align}
Therefore, 
\begin{align}
\dot{U}(\theta(t))
=
\sum_{i=1}^N \frac{\partial U}{\partial \theta_i}\dot{\theta}_i
=
-K\sum_{i=1}^N S(\theta_i)
\left(\frac{\partial U}{\partial \theta_i}\right)^2
\le 0,
\end{align}
which proves \eqref{eq:Udot_identical}. Since $U$ is continuous on the compact
set $\mathbb{T}^N$, it is bounded below. Thus $U(\theta(t))$ is nonincreasing
along trajectories and LaSalle's invariance principle~\cite{alligood1997chaos} implies convergence to
the largest invariant set contained in $\{\dot U=0\}$, which is precisely
\eqref{eq:invariant_set_identical}.
\end{proof}
{Because the gate is strictly positive for every finite $k$, receiver gating changes the rate of descent of the classical Kuramoto potential, but not its set of critical points. }

This identity shows that the receiver gate does not destroy the dissipative structure of the identical-oscillator model. 
Instead, it modulates the local rate of dissipation through the factor $S(\theta_i)$. 
When an oscillator lies in a phase region where the gate is small its contribution to the decay of the Lyapunov functional is correspondingly reduced. 
In this sense, the dead zone acts not by changing the direction of descent but by slowing the descent in selected parts of phase space. 

For smooth gating introduced in \eqref{eq:GD_def}, one has $S(\theta)>0$ for all $\theta$ whenever $k<\infty$.
In that case, $\dot U=0$ implies $\nabla U =0$, so the invariant set reduces to the critical set of the classical Kuramoto potential. 
This immediately suggests that the receiver gate preserves the equilibrium set of the classical Kuramoto model even though it changes the transient dynamics.

\subsection{Equilibrium characterization}
We next characterize the equilibrium set of \eqref{eq:identical_receiver_model}. 
A configuration
$\theta^\ast\in\mathbb{T}^N$ is an equilibrium if and only if
\begin{align*}
    \dot{\theta}_i=0 \qquad \text{for all } i=1,\dots,N.
\end{align*}
Substituting into \eqref{eq:identical_receiver_model} yields the condition
\begin{align}\label{eq:equilibrium_condition_identical}
S(\theta_i^\ast)\sum_{j=1}^N \sin(\theta_j^\ast-\theta_i^\ast)=0,
\qquad i=1,\dots,N.
\end{align}
Thus, in general, an oscillator may satisfy the equilibrium condition either
because the coupling sum vanishes or because the receiver gate is zero. For the
smooth dead-zone gate considered here, however, the second possibility does not
occur at finite $k$, since
\begin{align*}
    0<S(\theta)<1 \qquad \text{for all } \theta\in\mathbb{T}.
\end{align*}
Accordingly, \eqref{eq:equilibrium_condition_identical} reduces to
\begin{align}\label{eq:classical_equilibrium_condition_identical}
\sum_{j=1}^N \sin(\theta_j^\ast-\theta_i^\ast)=0,
\qquad i=1,\dots,N,
\end{align}
which is exactly the equilibrium condition for the classical identical Kuramoto
model. Equivalently, equilibria are precisely the critical points of the
potential $U$.

The synchronized manifold
\begin{align}\label{eq:sync_manifold_identical}
\mathcal{S} = \{\theta\in\mathbb{T}^N:\theta_1=\cdots=\theta_N \; (\mathrm{mod}\;2\pi)\}
\end{align}
is always contained in this equilibrium set. 
Indeed, if all oscillators share a common phase, then every phase difference vanishes and hence each coupling sum is zero. 
More generally, the equilibrium set additionally contains the same phase-locked states with symmetric phase configurations as in the classical all-to-all Kuramoto model.  
What changes in the receiver gated system is not the location of these equilibria but the way trajectories move toward them. 

For attractive coupling on the complete graph the synchronized manifold is the distinguished stable equilibrium family, whereas {the remaining critical configurations are not expected to be asymptotically stable.}
Combined with the Lyapunov structure above, this indicates that the receiver gate preserves the classical equilibrium geometry of the identical frequency system. 

\subsection{Local exponential stability}
We now examine the local stability of the synchronized manifold and identify how the dead zone modifies the convergence rate near synchrony. 
Let
\begin{align*}
    \theta_i(t) = \theta^\ast+\delta_i(t), \qquad |\delta_i|\ll 1,
\end{align*} 
where $\theta^\ast$ is a synchronized phase and $\delta_i$ represents a small perturbation. 

Using the expansion
\begin{align*}
    \sin(\theta_j-\theta_i)=\sin(\delta_j-\delta_i) = (\delta_j-\delta_i)+\mathcal{O}(\|\delta\|^2),
\end{align*}
we obtain
\begin{align}\label{eq:linear_sine_expansion}
\sum_{j=1}^N \sin(\theta_j-\theta_i) = \sum_{j=1}^N(\delta_j-\delta_i)+\mathcal{O}(\|\delta\|^2).
\end{align}
Similarly,
\begin{align}\label{eq:gate_expansion}
S(\theta_i) = S(\theta^\ast)+S'(\theta^\ast)\delta_i+\mathcal{O}(\|\delta\|^2).
\end{align}
Substituting \eqref{eq:linear_sine_expansion} and \eqref{eq:gate_expansion}
into \eqref{eq:identical_receiver_model} gives
\begin{align}\label{eq:prelinear_identical}
\dot{\delta}_i = \frac{K}{N}
\Big(S(\theta^\ast)+S'(\theta^\ast)\delta_i+\mathcal{O}(\|\delta\|^2)\Big)
\Big(\sum_{j=1}^N(\delta_j-\delta_i)+\mathcal{O}(\|\delta\|^2)\Big).
\end{align}
The term involving $S'(\theta^\ast)\delta_i$ is multiplied by a quantity of order $\mathcal{O}(\|\delta\|)$, so it contributes only at quadratic order.
Therefore, it does not enter the linearization. 
At leading order we obtain
\begin{align}\label{eq:linearized_identical}
\dot{\delta}_i = \frac{K\,S(\theta^\ast)}{N}\sum_{j=1}^N(\delta_j-\delta_i).
\end{align}
Introducing the graph Laplacian $L$ of the complete graph,
\begin{align*}
    L_{ij} =
\begin{cases}
N-1, & i=j,\\
-1, & i\neq j,
\end{cases}
\end{align*}
we may write \eqref{eq:linearized_identical} in vector form as
\begin{align}
\dot{\delta} =
-\frac{K\,S(\theta^\ast)}{N}L\delta.
\label{eq:linearized_vector_form}
\end{align}
The Laplacian of the complete graph has eigenvalues
\[
0=\lambda_1<\lambda_2=\cdots=\lambda_N=N,
\]
where the zero eigenvalue corresponds to the neutral direction
$\delta\propto \mathbf{1}$ associated with uniform phase shifts.
{Accordingly, local stability is understood on the quotient by this rotational symmetry, or equivalently on the subspace orthogonal to $\mathbf{1}$.}

All transverse modes therefore decay exponentially. More precisely, if
$\eta_m(t)$ denotes the amplitude of an eigenmode with $\lambda_m>0$, then
\begin{align}
\eta_m(t)
=
\eta_m(0)\exp\!\left(
-\frac{K\,S(\theta^\ast)}{N}\lambda_m t
\right).
\label{eq:eigenmode_decay_identical}
\end{align}
For the complete graph, this yields the decay rate
\[
\frac{K\,S(\theta^\ast)}{N}\lambda_2
=
K\,S(\theta^\ast).
\]

Thus, {after removing the neutral rotation mode associated with uniform phase shifts,} the synchronized manifold is locally exponentially {stable} whenever $S(\theta^\ast)>0$, which holds for the smooth gate with finite $k$. 
The receiver gate therefore leaves the local stability mechanism unchanged in structure, but rescales the effective coupling strength by the factor $S(\theta^\ast)$. In particular, synchrony is approached more slowly when the common phase lies near the dead zone, where $S(\theta^\ast)$ is small.

This provides a simple interpretation of the role of receiver gating in the
identical-frequency setting: the dead zone does not create a new local
instability, but it can substantially delay convergence by weakening the
restoring force near synchronization.

\subsection{Numerical study of convergence and dead-zone effects}
The analytical results above show that the identical-oscillator receiver gated model preserves the classical tendency toward synchronization while introducing a phase dependent reduction in the effective coupling strength. 
We now briefly show this effect numerically. 

Figure~\ref{fig:2}A-B show how the transient approach to synchrony depends on the dead zone width. 
For small or moderate dead zones, trajectories still converge rapidly toward the synchronized state, consistent with the local exponential stability analysis. 

\begin{figure}[ht]
    \centering
    \includegraphics[width=\linewidth]{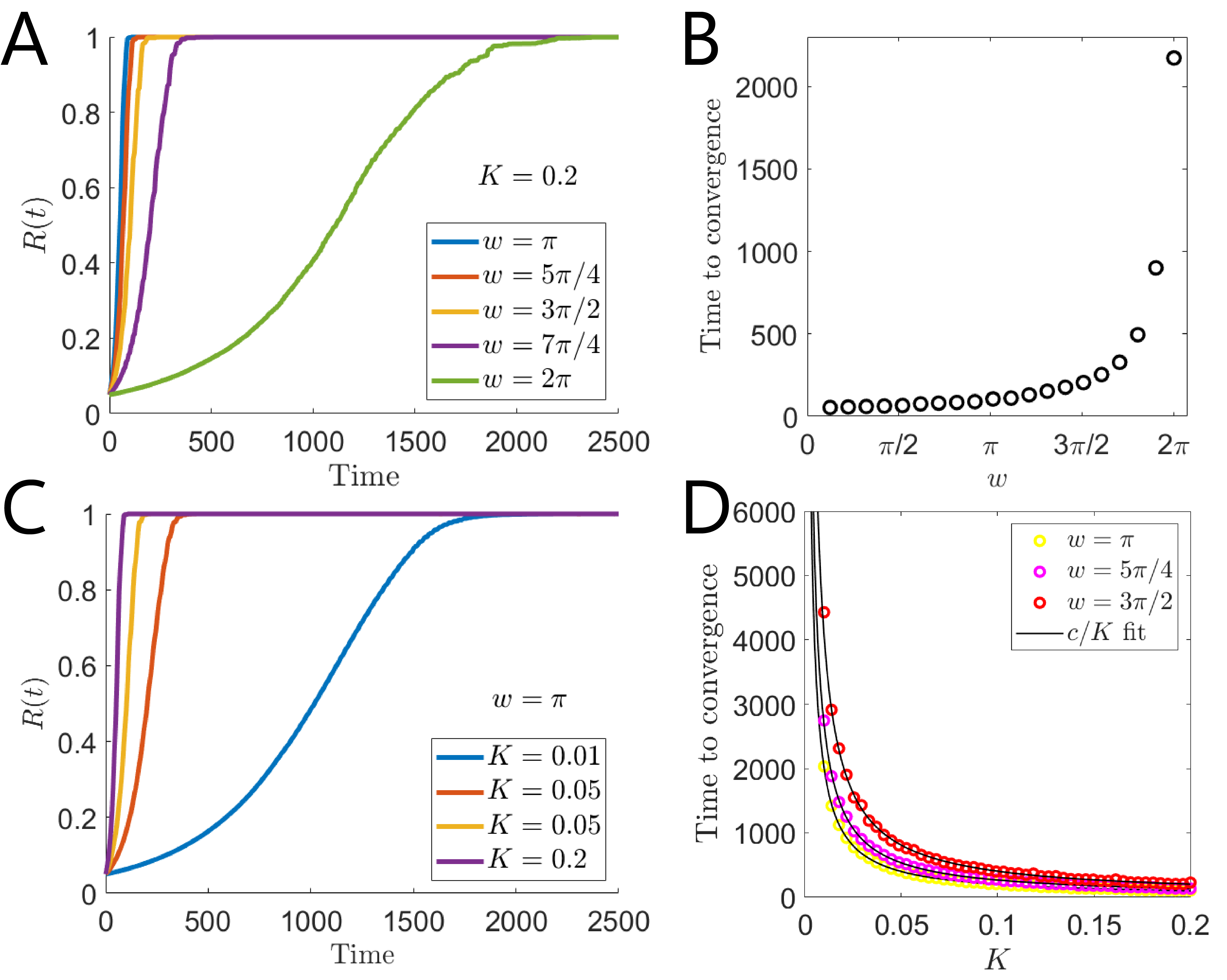}
    \caption{\textbf{Time to convergence depends nonlinearly on dead zone width.} (A) $R(t)$ over time for $w=\pi,\;5\pi/4,...,\;2\pi$. Other parameters are fixed at $K=0.2$, $N=20$, $k=10$. (B) Numerically computed time to convergence, defined as the first time point at which $|1-R(t)|<10^{-4}$, dependent on dead zone width $w$. $K=0.02,\;N=20,\;k=10$. (C) $R(t)$ over time for various $K$. Other parameters are fixed at $w=\pi$, $N=20$, $k=10$. (D) Time to convergence for varying $K$. Analytic $c/K$ curves were fit to data for $w=\pi,\;5\pi/4,\;3\pi/2$.}
    \label{fig:2}
\end{figure}

As the dead zone width increases, however, the convergence becomes noticeably slower. 
This delay is reflected both in the persistence of phase differences and in the slower growth of the order parameter toward $R=1$. 

This behavior can be understood directly from the linearization. 
Near a synchronized phase $\theta^\ast$ the characteristic decay rate is proportional to $KS(\theta^\ast)$. 
{Accordingly, the synchronization time is expected to scale heuristically like
\begin{align*}
    T_{\mathrm{sync}} \sim \frac{1}{K\,S(\theta^\ast)}.
\end{align*}}
{Thus, when the synchronized} phase lies closer to the dead zone, the gate value is smaller, the effective attraction toward synchrony is weakened, {and the transient time to synchronization increases. This heuristic is consistent with the numerical trend in Figure~\ref{fig:2}A-B, where broader dead zones produce markedly slower convergence. } 
In this way, increasing the width of the dead zone does not prevent synchronization in the identical oscillator case, but it can substantially prolong the transient time required to reach it. {Additionally, in numerical simulations, the time to convergence is proportional to $1/K$ (see \Cref{fig:2}C-D), corroborating our analytic prediction of $T_{\mathrm{sync}}$. As the dead zone width $w$ increases the time-to-convergence curve shifts upward.}


These simulations therefore complement our theoretical analysis in which the Lyapunov structure explains why synchrony remains the dominant stable outcome, while the linearization clarifies why convergence slows as dead zone effects become stronger. 

\section{Heterogeneous oscillators: phase locking and phase separation}
\label{sec:heterogeneous_analysis}
We now turn to the case of heterogeneous natural frequencies, where the oscillators need not evolve toward complete synchrony. 
In this setting, the relevant notion of collective coordination is not exact phase equality, but rather phase locking: a regime in which all oscillators rotate with a common frequency while maintaining nontrivial phase offsets. 
For the receiver gated model this heterogeneous regime is especially interesting because the phase-dependent receptivity modifies the usual balance laws of the classical Kuramoto system. 
As a result, both the locked frequency and the locked phase profile depend on the collective state of the network, and persistent phase separation may occur even when frequency locking is achieved. 

In the heterogeneous setting, phase-locked solutions rotate with a common collective frequency, so it is convenient to express the dead zone relative to a common reference phase rather than a fixed phase origin. 
Let \(\psi(t)\) denote an auxiliary reference phase defining the co-rotating frame. It is chosen so that, in a phase-locked state, the relative phases \(\theta_i(t)-\psi(t)\) remain constant in time. 
Thus \(\psi(t)\) serves as a rotating phase origin attached to the coherent collective motion, rather than necessarily being identified with the mean phase or the phase of a particular oscillator.

We therefore write the heterogeneous receiver-gated Kuramoto model as 
\begin{align}\label{eq:hetero_receiver_model_full}
\dot{\theta}_i = \omega_i+\frac{K}{N}S\bigl(\theta_i - \psi (t)\bigr)\sum_{j=1}^N \sin(\theta_j-\theta_i), \qquad i=1,\dots,N,
\end{align}
where $\omega_1, \dots, \omega_N \in \mathbb R$ are not assumed equal.
{In this formula, the dead zone is fixed in the co-rotating frame determined by $\psi(t)$ so the gate depends on the oscillator's phase relative to the common reference rather than on an absolute laboratory-frame phase. }

\subsection{Phase-locked solutions and fixed-point equations}
For heterogeneous oscillators, complete synchrony is generally no longer expected. 
Instead one seeks solutions in which all oscillators rotate with a common frequency while maintaining fixed phase offsets {relative to the reference phase.
{To describe such states, let $\Omega \in \mathbb R$ denote the common locked frequency and}
write
\begin{align}\label{eq:locked_reference_phase}
    \psi(t) = \Omega t + \psi_0.
\end{align}
{We then} introduce co-rotating coordinates
\begin{align}\label{eq:relative_phase_variables}
\phi_i(t):=\theta_i(t)-\psi(t).
\end{align}
Since $\theta_j - \theta_i = \phi_j - \phi_i$, system~\eqref{eq:hetero_receiver_model_full} becomes
\begin{align}\label{eq:hetero_relative_model}
    \dot \phi_i = \omega_i - \Omega + \frac{K}{N} S(\phi_i) \sum_{j=1}^N \sin (\phi_j - \phi_i), \qquad i = 1, \dots, N. 
\end{align}
A phase-locked state is therefore an equilibrium of the relative system~\eqref{eq:hetero_relative_model}. 
That is, we say the system is phase locked if there exist $\Omega \in \mathbb R$ and a profile $\vartheta = (\vartheta_1, \dots, \vartheta_N)\in \mathbb T^N$ such that
\begin{align}\label{eq:locked_relative_equilibrium}
    \phi_i(t) \equiv \vartheta_i, \qquad i=1, \dots, N.
\end{align}
Equivalently, in the original variables, 
\begin{align}\label{eq:locked_ansatz}
\theta_i(t)=\Omega t+ \psi_0 +  \vartheta_i,
\qquad i=1,\dots,N.
\end{align}
}
Substituting the equilibrium condition \(\phi_i(t)\equiv\vartheta_i\) into \eqref{eq:hetero_relative_model} yields
\begin{align}\label{eq:locked_equations}
\Omega = \omega_i+\frac{K}{N}S(\vartheta_i)\sum_{j=1}^N \sin(\vartheta_j-\vartheta_i), \qquad i=1,\dots,N.
\end{align}
Equivalently, 
\begin{align}\label{eq:locked_balance_equations}
\omega_i-\Omega = -\frac{K}{N}S(\vartheta_i)\sum_{j=1}^N \sin(\vartheta_j-\vartheta_i), \qquad i=1,\dots,N.
\end{align}
This is the finite-dimensional nonlinear system that determines phase locked solutions of the receiver gated model {in the co-rotating frame}. 

Equation~\eqref{eq:locked_balance_equations} has the same general interpretation as in the classical Kuramoto model: the mismatch between the intrinsic frequency $\omega_i$ and the common locked frequency $\Omega$ must be balanced by the net coupling received by oscillator $i$. 
The crucial difference is that the coupling received by each oscillator is weighted by the factor $S(\vartheta_i)$, which depends on the {oscillator's phase relative to the rotating dead-zone reference.} 
Thus the sensitivity of each oscillator to the population enters directly into the locking condition. 

This state dependence has an important consequence. In the classical Kuramoto model, the balance equations depend only on phase differences. 
In the receiver gated model, however, the individual phase locations {relative to the dead zone} also matter because the gate is evaluated at each $\vartheta_i$. 
Consequently, two locked states with the same pairwise phase differences but different placement relative to the dead zone need not have the same effective coupling structure. 

\subsection{Determination of the locked frequency}
In the classical all-to-all Kuramoto model, the locked frequency of any phase-locked state is simply the average natural frequency 
\begin{align*}
    \Omega=\frac{1}{N}\sum_{i=1}^N \omega_i,
\end{align*}
because the sine coupling cancels exactly when summed over all oscillators. 
For the receiver gated model, this cancellation no longer holds in general.
{Indeed, receiver gating breaks the usual pairwise cancellation because the interaction weight depends on the receiving oscillator, not only on the phase difference.}

Summing \eqref{eq:locked_equations} over $i$ gives
\begin{align}\label{eq:omega_sum_identity}
N\Omega = \sum_{i=1}^N \omega_i + \frac{K}{N} \sum_{i=1}^N S(\vartheta_i)\sum_{j=1}^N \sin(\vartheta_j-\vartheta_i).
\end{align}
The last term does not vanish automatically, {since the receiver-dependent factor $S(\vartheta_i)$ breaks the classical pairwise anti-symmetry.} 

Using $\sin(\vartheta_j-\vartheta_i)=-\sin(\vartheta_i-\vartheta_j),$ the double sum can be rewritten in pairwise form as 
\begin{align}\label{eq:pairwise_locked_sum}
\sum_{i=1}^N S(\vartheta_i)\sum_{j=1}^N \sin(\vartheta_j-\vartheta_i) = \sum_{1\le i<j\le N} \bigl(S(\vartheta_i)-S(\vartheta_j)\bigr) \sin(\vartheta_j-\vartheta_i).
\end{align}
Therefore,
\begin{align}\label{eq:locked_frequency_formula}
\Omega = \frac{1}{N}\sum_{i=1}^N \omega_i + \frac{K}{N^2} \sum_{1\le i<j\le N} \bigl(S(\vartheta_i)-S(\vartheta_j)\bigr) \sin(\vartheta_j-\vartheta_i).
\end{align}
This identity makes the role of receiver gating explicit. 
The locked frequency is no longer determined solely by the mean intrinsic frequency; it also depends on how the locked phase profile is positioned relative to the dead zone.
In particular, oscillators at phases where the gate is large contribute more strongly to the coupling balance than oscillators at phases where the gate is small. 
Thus, the locked frequency becomes a genuinely state-dependent quantity.

When $S$ is constant, the correction term in \eqref{eq:locked_frequency_formula} vanishes and one recovers the classical Kuramoto result. 
The receiver-gated model therefore differs from the classical
system not only in its transient dynamics, but already at the level of the frequency-balance law for locked states.

\subsection{Choice of reference phase and normalization}
In the present formulation, the quantities \(\vartheta_i\) are phases measured relative to the co-rotating reference phase \(\psi(t)\). 
Accordingly, the role of normalization here is not to quotient out a classical rotation symmetry of the original system, but rather to fix the phase origin of the rotating frame. A convenient choice is
\begin{align}\label{eq:grounding_condition}
\sum_{i=1}^N \vartheta_i = 0,
\end{align}
{which selects a representative of the locked profile by fixing the phase origin of the co-rotating frame.}
Equivalently, one may fix a single component, for example $\vartheta_N =0$.
Under such a normalization, the system \eqref{eq:locked_balance_equations} becomes a finite-dimensional nonlinear problem for the unknowns $(\vartheta,\Omega)$.

This {normalization} is useful both analytically and numerically. 
Analytically, it selects {a unique representative of the locked profile in the chosen co-rotating frame.}
Numerically, it prevents spurious drift along the {choice of phase origin} 
and makes the computation of locked states better posed.
{It is also consistent with the numerical visualizations, where phase offsets are plotted relative to a common reference, taken in Figure~\ref{fig:3} to be the mean phase \(\bar{\theta}\).}

\begin{figure}[H]
    \centering
    \includegraphics[width=\linewidth]{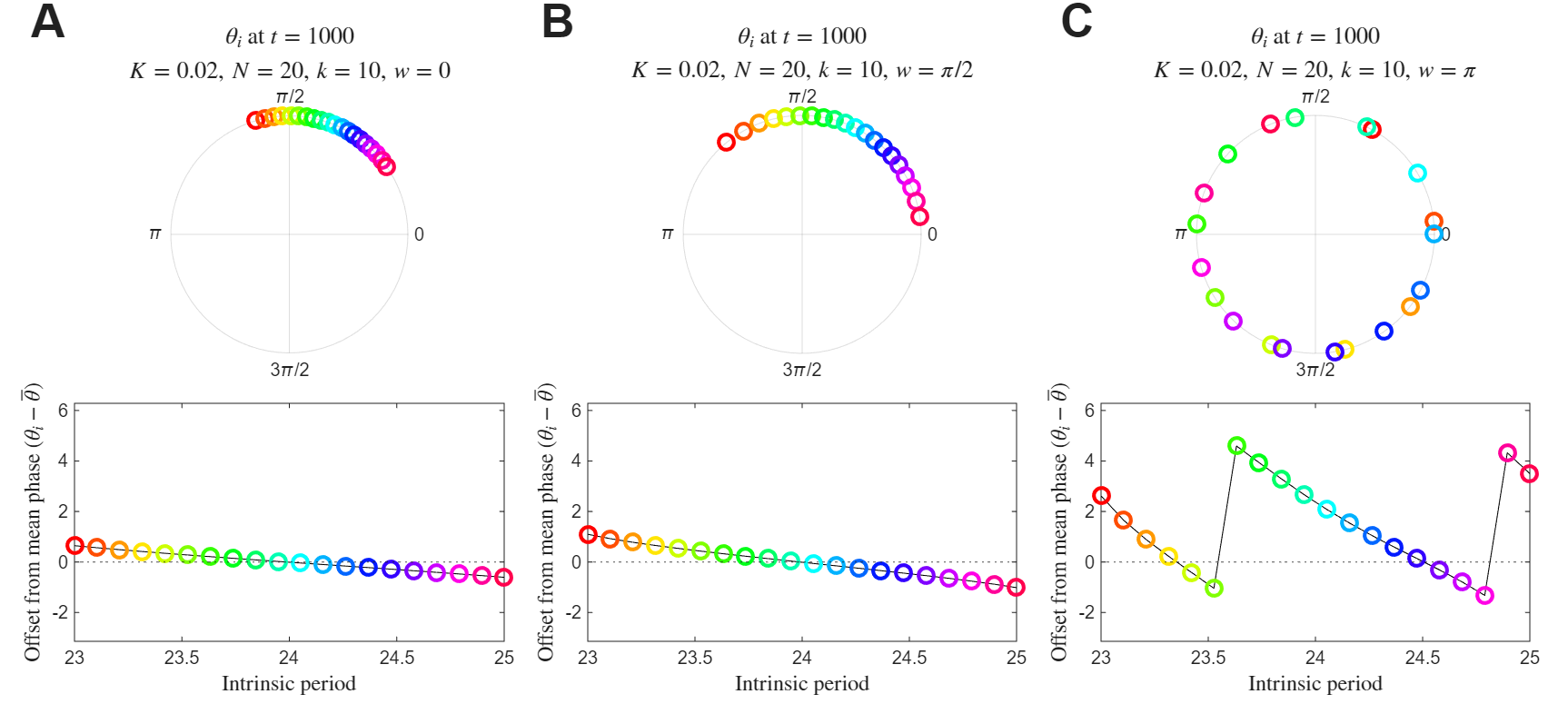}
    \caption{
\textbf{Solutions at $t=1000$.} (A) Phase-locked phases of oscillators at $t=1000$ for $K=0.02$, $N=20$, and no dead zone. Initial conditions were uniformly distributed from $0$ to $2\pi$. Below, phase offset of oscillators at $t=1000$ relative to mean phase $\overline{\theta}$ plotted against each oscillator's intrinsic period $\tau = 2\pi/\omega_i$. (B) Phase-locked phases of oscillators at $t=1000$ for $K=0.02$,\; $N=20$,\; $w=\pi/2$. (C) Non-phase-locked solutions at $t=1000$ for $K=0.02$,\; $N=20$,\; $w=\pi$.}
    \label{fig:3}
\end{figure}


{A key difference from the identical-frequency case is that, near a nontrivial locked state, the coupling sums need not vanish. As a result, the derivative of the receiver gate generally contributes at leading order in the heterogeneous linearization.}

\subsection{Linearization about a phase-locked profile}
We now examine the local dynamics near a phase-locked state of the relative system
\begin{align}\label{eq:hetero_relative_model_linearization}
    \dot \phi_i = \omega_i - \Omega + \frac{K}{N} S(\phi_i) \sum_{j=1}^N \sin(\phi_j - \phi_i), \qquad i=1, \dots, N. 
\end{align}
Let $\vartheta = (\vartheta_1, \dots, \vartheta_N) \in \mathbb T^N$ be a phase-locked profile satisfying
\begin{align}\label{eq:locked_balance_equations_linearization}
    \omega_i - \Omega = -\frac{K}{N} S(\vartheta_i) \sum_{j=1}^N
 \sin(\vartheta_j - \vartheta_i), \qquad i=1,\dots, N. 
 \end{align}
To study perturbations about this coherent state, we write
\begin{align}\label{eq:hetero_perturbation_ansatz}
    \phi_i(t) = \vartheta_i + \eta_i(t), \qquad i=1,\dots, N,
\end{align}
where $|\eta_i(t)| \ll 1$. Substituting \eqref{eq:hetero_perturbation_ansatz} into \eqref{eq:hetero_relative_model_linearization}, expanding to first order, and using \eqref{eq:locked_balance_equations_linearization}, we obtain
\begin{align}\label{eq:hetero_linearized_component_form}
\dot{\eta}_i &= \frac{K}{N} \Bigg[S'(\vartheta_i)\eta_i
\sum_{j=1}^N \sin(\vartheta_j-\vartheta_i) + S(\vartheta_i) \sum_{j\ne i}\cos(\vartheta_j-\vartheta_i)\,(\eta_j-\eta_i) \Bigg] + \mathcal O(|\eta|^2),
\end{align}
where $|\eta|^2$ denotes terms that are at least quadratic in the perturbation. 

Equation~\eqref{eq:hetero_linearized_component_form} makes the structure of the heterogeneous linearization transparent. 
The first term arises from the variation of the receiver gate itself and is proportional to $S'(\vartheta_i)$. 
The second term is the usual coupling contribution obtained by linearizing the sine interaction, but it is weighted by the local gate value $S(\vartheta_i)$. 
Thus, both the value and the derivative of the receiver gate enter the local dynamics near a non-trivial locked profile. 

Equivalently, the linearized system may be written in the matrix form as
\begin{align}\label{eq:hetero_linearized_matrix_form}
    \dot \eta = J(\vartheta) \eta + \mathcal O(|\eta|^2)
\end{align}
where the Jacobian $J(\vartheta) = \left[J_{i\ell}(\vartheta)\right]_{i,\ell=1}^N$ is given by
\begin{align}\label{eq:hetero_jacobian_offdiag}
    J_{i\ell}(\vartheta) = \frac{K}{N} S(\vartheta_i) \cos(\vartheta_\ell - \vartheta_i), \qquad \ell \neq i,
\end{align}
\begin{align}\label{eq:hetero_jacobian_diag}
    J_{ii}(\vartheta) = \frac{K}{N} \left(S'(\vartheta_i) \sum_{j=1}^N \sin(\vartheta_j - \vartheta_i) - S(\vartheta_i) \sum_{j \neq i} \cos(\vartheta_j - \vartheta_i)\right). 
\end{align}
Using the locked state balance relation~\eqref{eq:locked_balance_equations_linearization}, the diagonal entry may also be written as
\begin{align}\label{eq:hetero_jacobian_diag_alt}
    J_{ii}(\vartheta) = \frac{S'(\vartheta_i)}{S(\vartheta_i)} (\Omega - \omega_i) - \frac{K}{N} S(\vartheta_i) \sum_{j \neq i} \cos(\vartheta_j - \vartheta_i), 
\end{align}
which makes the role of heterogeneity especially clear. 
Indeed, the term involving $S'(\vartheta_i)$ is weighted by the mismatch between the locked frequency and the intrinsic frequency of oscillator $i$. 
Hence, whenever the oscillators have different natural frequencies and the locked profile is non-trivial, the derivative of the receiver gate contributes directly to the local linear dynamics. 


{This is an important difference from the identical-frequency synchronization ana-lysis. In the identical case, linearization about complete synchrony eliminates the derivative of the gate at first order because the coupling sum vanishes at the synchronized state. Here, by contrast, the sums \(\sum_{j=1}^N \sin(\vartheta_j-\vartheta_i)\) need not vanish, so \(S'(\vartheta_i)\) enters directly into the Jacobian. Thus, the local structure of a heterogeneous locked state depends not only on the phase offsets themselves, but also on how sensitively each oscillator responds near its location relative to the dead zone.}

The matrix $J(\vartheta)$ provides the natural local stability test for a phase locked profile. 
If all eigenvalues of $J(\vartheta)$ have negative real part (after imposing the chosen normalization of the rotating frame, if applicable), then the corresponding phase locked state is locally asymptotically stable in the relative coordinates. 
In general, because the Jacobian is neither symmetric nor circulant, its spectral properties depend non-trivially on the locked phase profile, the intrinsic frequencies, and the gate shape. 
For this reason, the formulas above are most useful both as a structural analytical characterization and as the basis for numerical stability calculations of coherent states in the heterogeneous receiver gated model.

\allblack

\subsection{Phase separation in heterogeneous oscillators}
A central feature of the heterogeneous receiver-gated model is that frequency locking does not imply complete synchrony. Even when all oscillators rotate with a common long-time frequency, the corresponding locked profile may remain distributed across the circle. We refer to this persistent spread of the locked profile as \emph{phase separation}. More precisely, phase separation occurs when the system approaches a phase-locked state
\begin{align*}
    \phi_i(t)\equiv \vartheta_i, \qquad i=1,\dots,N,
\end{align*}
with nontrivial relative phase offsets, so that \(\vartheta_i-\vartheta_j\not\equiv 0 \;(\mathrm{mod}\;2\pi)\) for at least some pairs \((i,j)\). In such a state, the oscillators share a common locked frequency \(\Omega\), but they do not collapse to a single phase.

To quantify the amount of phase separation in a locked profile \(\vartheta=(\vartheta_1,\dots,\vartheta_N)\), we define
\begin{align*}
    \Delta_{\max}(\vartheta):=\max_{1\le i,j\le N}\operatorname{dist}_{\mathbb T}(\vartheta_i,\vartheta_j),
\end{align*}
where \(\operatorname{dist}_{\mathbb T}(a,b)=|d(a,b)|\) denotes the circular distance on \(\mathbb T\), with \(d\) as in \eqref{eq:wrapped_distance}. Thus, \(\Delta_{\max}=0\) corresponds to complete synchrony, while larger values of \(\Delta_{\max}\) indicate broader phase-separated locked profiles.

Phase separation arises naturally in the receiver-gated model because the locking balance depends not only on pairwise phase differences, but also on the local gate values \(S(\vartheta_i)\). Oscillators with different intrinsic frequencies require different coupling corrections in order to rotate at a common locked frequency, and these corrections are modulated by phase-dependent receptivity. As a result, the coherent state reflects a compromise among intrinsic frequency mismatch, relative phase offsets, and receiver-side phase sensitivity.

Figure~\ref{fig:3} illustrates this behavior for a representative heterogeneous simulation. Panel (A) shows that the oscillator phases at late time remain distributed around the circle rather than collapsing to a single point, consistent with a phase-locked state satisfying \(\Delta_{\max}(\vartheta)>0\). In the lower panel, the corresponding phase offsets relative to the mean phase \(\bar\theta\) vary systematically with intrinsic period, indicating that the phase-separated locked profile is structured by frequency heterogeneity.

Thus, coherent collective behavior in the receiver-gated model need not take the form of near synchrony. Instead, the model naturally supports frequency-locked states with persistent and structured phase separation. 
\allblack

\subsection{A local coherent branch for small heterogeneity}
We now show that phase locked states persist under sufficiently small frequency heterogeneity. 
This provides a local analytical justification for the coherent profiles observed numerically and clarifies how phase separation emerges from the synchronized state. 

Let 
\begin{align}\label{eq:small_heterogeneity_frequencies}
    \omega_i = \bar \omega + \varepsilon \nu_i, \qquad i=1, \dots, N,
\end{align}
where $\bar \omega \in \mathbb R$ is the mean intrinsic frequency, $\nu_1, \dots, \nu_N \in \mathbb R$ satisfy
\begin{align}
    \sum_{i=1}^N \nu_i = 0,
\end{align}
and $\varepsilon$ is a small parameter measuring the strength of heterogeneity.
Fix a reference phase $\theta^\ast \in \mathbb T$ with $S(\theta^\ast)>0$. When $\varepsilon=0$, the identical frequency system admits the synchronized locked profile 
\begin{align*}
    \vartheta_i = \theta^\ast, \qquad i=1, \dots, N, 
\end{align*}
with common locked frequency $\Omega = \bar \omega$. We seek a nearby branch of phase locked solutions of the form 
\begin{align}\label{eq:rho_decomposition}
    \vartheta_i = \theta^\ast + \rho_i, \qquad i=1, \dots, N, 
\end{align}
where the perturbation $\rho = (\rho_i, \dots, \rho_N)$ is assumed small and satisfies the normalization
\begin{align*}
    \sum_{i=1}^N \rho_i =0.
\end{align*}
Thus, $\rho_i$ measures the deviation of oscillator $i$ from the synchronized reference phase $\theta^\ast$. Substituting \eqref{eq:small_heterogeneity_frequencies} and \eqref{eq:rho_decomposition} into the locked state equations
\begin{align*}
    \Omega = \omega_i + \frac{K}{N} S(\vartheta_i) \sum_{j=1}^N \sin (\vartheta_j - \vartheta_i), \qquad i=1, \dots, N, 
\end{align*}
gives the nonlinear system
\begin{align}\label{eq:small_heterogeneity_balance}
    0 = \bar \omega + \varepsilon \nu_i -\Omega + \frac{K}{N} S(\theta^\ast + \rho_i) \sum_{j=1}^N \sin (\rho_j - \rho_i), \qquad i=1, \dots, N. 
\end{align}

\begin{prop}
    Fix $\theta^\ast \in \mathbb T$ such that $S(\theta^\ast)>0$, and let $\omega_i = \bar \omega + \varepsilon \nu_i$ with $\sum_{i=1}^N \nu_i =0$. Then there exists $\varepsilon_0>0$ such that for all $|\varepsilon|< \varepsilon_0$, the receiver gated system admits a unique smooth branch of phase locked solutions $(\rho(\varepsilon), \Omega(\varepsilon))$ satisfying $\rho(0) =0, \Omega(0) = \bar \omega$, and the normalization $\sum_i \rho_i(\varepsilon)=0$.
    Equivalently, there exists a unique smooth locked profile 
    \begin{align*}
        \vartheta_i (\varepsilon) = \theta^\ast + \rho_i(\varepsilon), \qquad i=1, \dots, N, 
    \end{align*}
    near the synchronized state $\vartheta_i = \theta^\ast$. \\
    Moreover, this branch satisfies the first order expansion
    \begin{align}
        \rho_i(\varepsilon) &= \frac{\varepsilon}{K S(\theta^\ast)} \nu_i + \mathcal O(\varepsilon^2), \qquad i=1, \dots, N, \label{eq:rho_first_order_expansion}\\[0.5em]
        \Omega(\varepsilon) &= \bar \omega + \mathcal O(\varepsilon^2). \label{eq:Omega_first_order_expansion}
    \end{align}
    Hence, 
    \begin{align}\label{eq:phase_difference_first_order}
        \vartheta_i(\varepsilon) - \vartheta_j(\varepsilon) = \frac{\varepsilon}{K S(\theta^\ast)} (\nu_j - \nu_i) + \mathcal O(\varepsilon^2), \qquad i, j = 1, \dots, N. 
    \end{align}
\end{prop}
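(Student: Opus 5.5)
The plan is to apply the implicit function theorem to the locked-state system \eqref{eq:small_heterogeneity_balance}, posed on the normalized space of perturbations. Let $V:=\{\rho\in\mathbb R^N:\sum_i\rho_i=0\}$, which has dimension $N-1$. Define $G:V\times\mathbb R\times\mathbb R\to\mathbb R^N$ componentwise by
\begin{align*}
G_i(\rho,\Omega,\varepsilon)=\bar\omega+\varepsilon\nu_i-\Omega+\frac{K}{N}S(\theta^\ast+\rho_i)\sum_{j=1}^N\sin(\rho_j-\rho_i).
\end{align*}
The unknowns $(\rho,\Omega)$ live in a space of dimension $(N-1)+1=N$, which matches the number of equations. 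At $\varepsilon=0$ the synchronized profile gives $G(0,\bar\omega,0)=0$, because every sine sum vanishes.

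\textbf{Regularity.} I need $G$ to be at least $C^1$, and in fact $C^\infty$ if the branch is to be smooth, near $(0,\bar\omega,0)$. This holds because the sine terms are analytic and $S$ is smooth in a neighbourhood of $\theta^\ast$, as the paper assumes for the gate \eqref{eq:GD_def} at finite $k$. One caveat should be recorded. Through the wrapped distance $d$, the gate is a smooth even function of $d$, so the only possible loss of smoothness is at the antipode $\theta_0+\pi$. If $\theta^\ast$ happens to sit there, I will either invoke the paper's standing smoothness assumption or restrict to $\theta^\ast\neq\theta_0+\pi$. Only local smoothness near $\theta^\ast$ is needed.

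\textbf{Invertibility of the linearization.} This is the key step. Linearizing at $(0,\bar\omega,0)$ works exactly as in the derivation of \eqref{eq:linearized_identical}. The $S'$ term multiplies a vanishing sine sum, so it drops out, and
\begin{align*}
D_{(\rho,\Omega)}G\,(\hat\rho,\hat\Omega)=-\frac{K S(\theta^\ast)}{N}L\hat\rho-\hat\Omega\mathbf 1 .
\end{align*}
For $\hat\rho\in V$ we have $L\hat\rho=N\hat\rho$, since $L=NI-\mathbf 1\mathbf 1^\top$. The map is therefore $(\hat\rho,\hat\Omega)\mapsto -KS(\theta^\ast)\hat\rho-\hat\Omega\mathbf 1$. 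It is injective: if the image vanishes, summing the components gives $N\hat\Omega=0$, and then $S(\theta^\ast)>0$ forces $\hat\rho=0$. Since domain and codomain both have dimension $N$, the map is an isomorphism. This is exactly where the hypothesis $S(\theta^\ast)>0$ and the normalization enter; without the normalization, the rotational direction $\mathbf 1$ would make the map singular.

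\textbf{Branch and expansions.} The implicit function theorem then yields $\varepsilon_0>0$ and a unique smooth branch $(\rho(\varepsilon),\Omega(\varepsilon))$ with $\rho(0)=0$ and $\Omega(0)=\bar\omega$. The uniqueness is local, in a neighbourhood of $(0,\bar\omega)$, and that is the sense in which the statement should be read. To get the first-order terms, I differentiate $G(\rho(\varepsilon),\Omega(\varepsilon),\varepsilon)=0$ at $\varepsilon=0$, using $\partial_\varepsilon G=\nu$:
\begin{align*}
-KS(\theta^\ast)\rho'(0)-\Omega'(0)\mathbf 1+\nu=0 .
\end{align*}
Summing the components and using $\sum_i\nu_i=0$ and $\sum_i\rho_i'(0)=0$ gives $\Omega'(0)=0$. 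Substituting back gives $\rho'(0)=\nu/(KS(\theta^\ast))$. Taylor expansion then yields \eqref{eq:rho_first_order_expansion} and \eqref{eq:Omega_first_order_expansion}. Subtracting the expansions for two oscillators gives the phase-difference formula with $\nu_i-\nu_j$; the statement writes $\nu_j-\nu_i$, so I will check this sign when writing it out. I expect the main obstacle to be the invertibility step, not these routine expansions.
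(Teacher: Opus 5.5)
Your proposal is correct and is essentially the paper's proof. The paper imposes the normalization by appending $\mathcal G_{N+1}=\sum_i\rho_i$ as an extra equation instead of restricting to $V$, but the kernel computation (sum over $i$ to kill the $\Omega$-component, then use $S(\theta^\ast)>0$ and the normalization) and the first-order expansion are the same.

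Your sign is the right one: $\rho_i=\frac{\varepsilon\nu_i}{KS(\theta^\ast)}+\mathcal O(\varepsilon^2)$ gives $\vartheta_i-\vartheta_j=\frac{\varepsilon}{KS(\theta^\ast)}(\nu_i-\nu_j)+\mathcal O(\varepsilon^2)$, so \eqref{eq:phase_difference_first_order} as displayed has $i$ and $j$ swapped. Your antipode caveat is also warranted: $S$ is an even function of $d$ whose $d$-derivative is nonzero at $d=\pi$, so $S$ has a genuine, small but nonzero corner at $\theta_0+\pi$.
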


\begin{proof}
    Define 
    \begin{align*}
        \mathcal G_i (\rho, \Omega, \varepsilon) := \bar \omega + \varepsilon \nu_i -\Omega + \frac{K}{N} S(\theta^\ast +\rho_i) \sum_{j=1}^N \sin (\rho_j - \rho_i), \qquad i=1, \dots, N,
     \end{align*}
     and impose the normalization through the additional equation 
     \begin{align*}
         \mathcal G_{N+1}(\rho, \Omega, \varepsilon) := \sum_{i=1}^N \rho_i 
     \end{align*}
Thus we seek zeros of $\mathcal G = (\mathcal G_1, \dots, \mathcal G_N, \mathcal G_{N+1}) \in \mathbb R^{N+1}$. At $(\rho, \Omega, \varepsilon) = (0, \bar \omega, 0)$, one has $\mathcal G(0, \bar \omega, 0)=0$, since the sine sum vanished when all $\rho_i = 0$. We compute the derivative of $\mathcal G$ with respect to $(\rho, \Omega)$ at this point. 
For a variation $(x, \mu)\in \mathbb R^N \times \mathbb R$, 
\begin{align}
    &D_{(\rho, \Omega)} \mathcal G_i(0, \bar \omega, 0)[x, \mu] = -\mu + \frac{K}{N} S(\theta^\ast) \sum_{j=1}^N (x_j - x_i), \qquad i=1, \dots, N, \label{eq:DG_first_N} \\[0.5em]
    &D_{(\rho, \Omega)} \mathcal G_{N+1}(0, \bar \omega, 0)[x, \mu] = \sum_{i=1}^N x_i. \label{eq:DG_constraint}
\end{align}
Suppose $D_{(\rho, \Omega)} \mathcal G(0, \bar \omega, 0)[x, \mu] =0$. Summing \eqref{eq:DG_first_N} over $i$ gives $-N_\mu =0$, so $\mu=0$. Then \eqref{eq:DG_first_N} implies
\begin{align*}
    \sum_{j=1}^N x_j - Nx_i =0, \qquad i=1, \dots, N,
\end{align*}
So all $x_i$ are equal. 
Using \eqref{eq:DG_constraint}, we obtain $\sum_i x_i=0$ for all $i$. 
Therefore, the derivative $D_{(\rho, \Omega)} \mathcal G (0, \bar \omega, 0)$ is invertible. 

By implicit function theorem~\cite{krantz2002implicit}, there exist $\varepsilon_0>0$ and a unique smooth branch $(\rho(\varepsilon), \Omega(\varepsilon))$ for $|\varepsilon|< \varepsilon_0$ such that 
\begin{align*}
    \mathcal G \left(\rho(\varepsilon), \Omega(\varepsilon), \varepsilon \right) =0, \quad \rho(0)=0, \quad \Omega(0)= \bar \omega.
\end{align*}
To obtain the leading order behavior, write 
\begin{align*}
    \rho_i (\varepsilon) = \varepsilon a_i + \mathcal O(\varepsilon^2), \qquad \Omega(\varepsilon) = \bar \omega + \varepsilon \Omega_1 + \mathcal O(\varepsilon^2). 
\end{align*}
where $\Omega_1$ denotes the first-order correction to the locked frequency.
Expanding \eqref{eq:small_heterogeneity_balance} to first order gives
\begin{align}\label{eq:first_order_balance}
0 = \nu_i - \Omega_1 + \frac{K}{N}S(\theta^\ast)\sum_{j=1}^N (a_j-a_i),
\qquad i=1,\dots,N.
\end{align}
Summing over $i$ and using $\sum_i \nu_i =0$ gives $\Omega_1=0$. Hence
\begin{align}
    a_i = \frac{\nu_i}{K S(\theta^\ast)}, \qquad i=1, \dots, N, 
\end{align}
which yields \eqref{eq:rho_first_order_expansion} and \eqref{eq:Omega_first_order_expansion}. 
Formula \eqref{eq:phase_difference_first_order} follows immediately.
\end{proof}
This result gives a precise local description of phase separation near synchrony. 
For sufficiently small heterogeneity, a unique coherent branch persists, and the phase offsets are ordered at leading order by intrinsic frequency deviation $\nu_i$. 
In particular, the amount of phase separation scales like $1/S(\theta^\ast)$: if the synchronized reference phase lies in a weakly receptive part of the cycle, then the same level of frequency heterogeneity produces a broader locked profile.
Thus, even close to synchrony, the receiver gate directly controls how intrinsic heterogeneity is converted into persistent phase dispersion.

\subsection{Influence of dead zones for heterogeneous oscillators}
To further examine the role of receiver gating in the heterogeneous regime, we numerically compare the collective dynamics for various dead zone widths while keeping the remaining parameters fixed. By introducing a dead zone of width $w=\pi/2$ (see \Cref{fig:3}B) we find that phase-locked solutions are more widely distributed relative to the mean phase. 
Figure~\ref{fig:4} shows representative simulations over time for $N=20$ heterogeneous oscillators with coupling strength $K=0.02$ and gate sharpness $k=10$ as before. 
The top row shows the Kuramoto order parameter $R(t)$ while the bottom row shows the phase differences $\theta_i - \bar \theta$ relative to the mean phase.

\begin{figure}[H]
    \centering
    \includegraphics[width=1\linewidth]{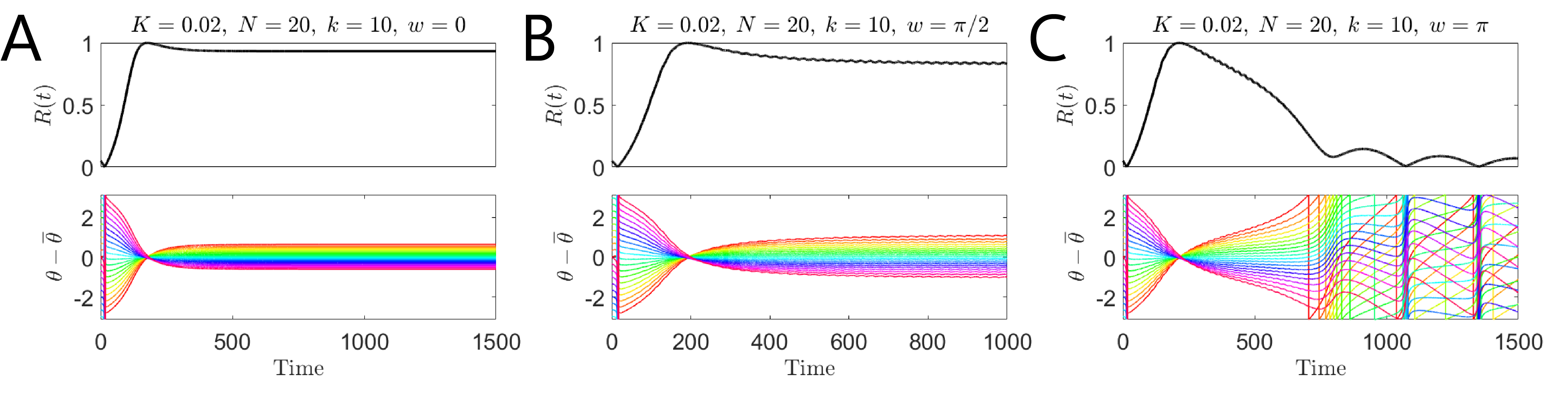}
    \caption{\textbf{Phase distribution depends on dead zone width.} $K=0.02,\; N=20,\; k=10\; \omega_i=2\pi/\left(23+\frac{2(i-1)}{N-1}\right)$. Initial conditions were uniformly distributed from $0$ to $2\pi$. (A) Numerical solutions with no dead zone. (B) Dead zone width $w=\pi/2$. (C) $w=\pi$.}
    \label{fig:4}
\end{figure}

Without a dead zone, shown in Figure~\ref{fig:4}A, the system is identical to the classical heterogeneous Kuramoto model. 
The order parameter rapidly increases toward a value close to one, indicating a highly coherent state, and the phase differences settle into a narrow bounded profile. 
Although the oscillators do not collapse to a single phase, they approach a {numerically observed} phase-locked configuration with only modest phase separation. 

When the dead zone width is increased to $w=\pi/2$ as shown in Figure~\ref{fig:4}B, the system still converges to a coherent state but the long time behavior changes in an important way, with the phase-locked profile more broadly distributed. 
Thus, frequency coordination is preserved but the gate weakens the effective coupling enough to allow larger persistent phase offsets in the locked state. 

A more dramatic change occurs for the larger dead zone $w=\pi$ shown in Figure~\ref{fig:4}C. 
In this case, the order parameter transiently rises but then declines as the oscillators disperse and drift in phase.
The corresponding phase trajectories no longer settle into a tightly organized locked profile. 
Instead, the population exhibits a much broader and more irregular phase distribution (also see \Cref{fig:3}C), indicating that receiver gating can prevent coherent locking when the dead zone is large enough. Overall, increasing the dead zone width does not simply slow convergence as in the identical frequency case. For heterogeneous oscillators, dead zones can alter the long time coherent state itself.

Furthermore, we sampled the dynamics on the Poincar\'{e} section $\bar \theta = \pi \;(\mathrm{mod}\;2\pi)$, where $\bar \theta = \arg\!\left(\frac{1}{N}\sum_{j=1}^N e^{i\theta_j}\right)$ denotes the mean phase of the population; see \Cref{fig:5}. This choice provides one snapshot per collective rotation and allows trajectories with different long-time behaviors to be compared on a common phase slice. 
We plot the relative phase $\theta_1-\bar{\theta}$ rather than the absolute phase $\theta_1$, since phase locking is a property of phase differences and not of the overall rotational position of the population. 
In the phase-locked state, the relative phases become constant, so the Poincar\'{e} section collapses to a single point (or a small numerically broadened cluster) after a transient period. 
By contrast, if the relative phases continue to evolve, then successive intersections produce a cloud or band of points, indicating phase drifting.

\begin{figure}[H]
    \centering
    \includegraphics[width=\linewidth]{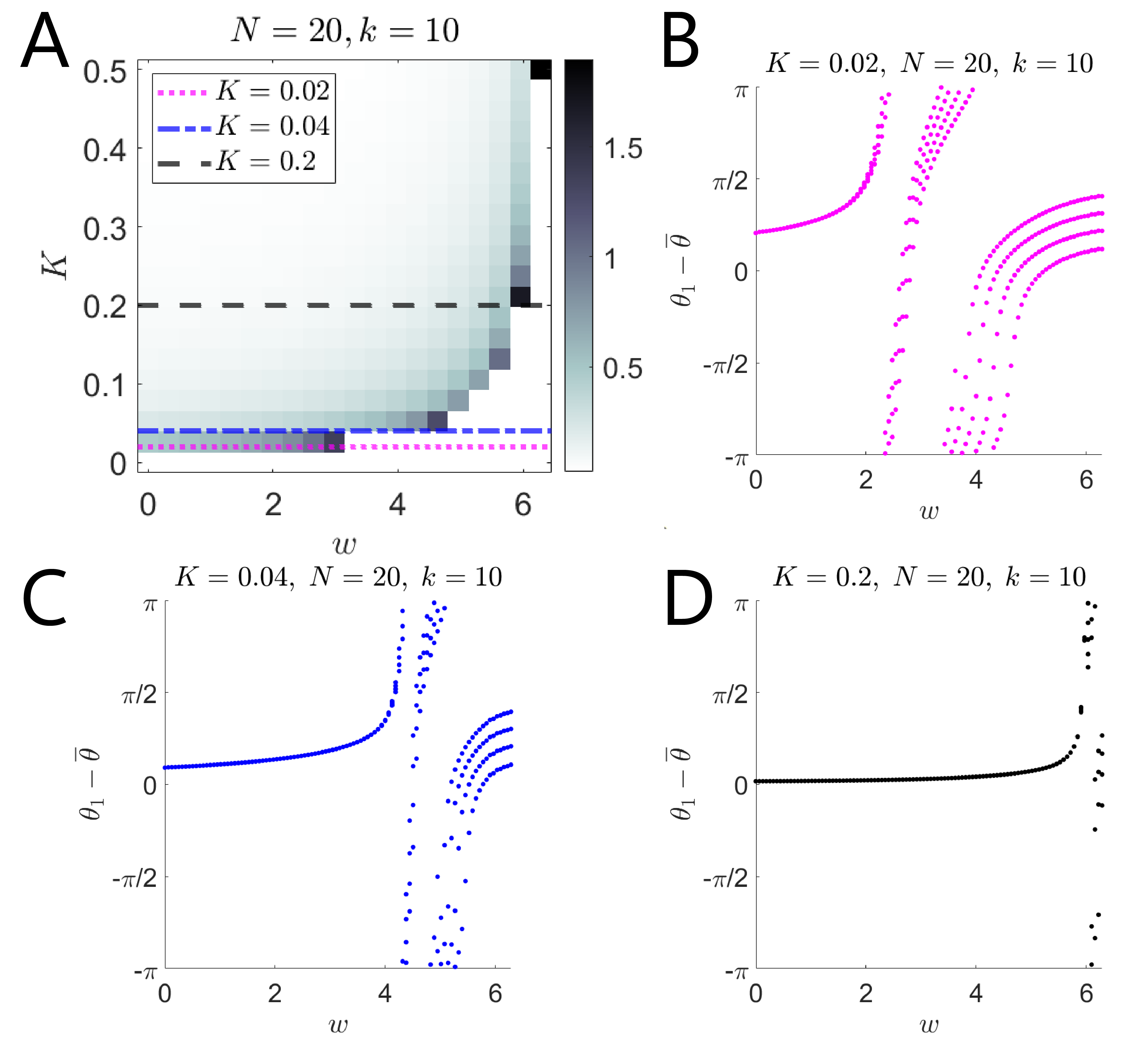}
    \caption{\textbf{Numerical Poincar\'{e} sections at mean phase sampled at $\overline{\theta} = \pi \;(\text{mod}\; 2\pi)$.} (A) Steady phase gaps $\theta_1-\overline{\theta}$ for varying coupling strength $K$ and dead zone width $w$. Numerical solutions were computed to $t=1000$ with initial conditions distributed uniformly from 0 to $2\pi$. Poincar\'{e} sections of the phase gap $\theta_1-\overline{\theta}$ were taken at $\overline{\theta} = \pi$ after $t=900$ to account for transient behavior.
    {Isolated, smooth parts of the curves in panels B-D correspond to phase-locked behavior, whereas extended point clouds indicate phase drifting.}
    The lower right  region corresponds to non-phase-locked behavior. (B) Poincar\'{e} sections for $K=0.02$. (C) Poincar\'{e} sections for $K=0.04$. (D) Poincar\`{e} sections for $K=0.2$.} 
    \label{fig:5}
\end{figure}

As discussed earlier, the dead zone effectively increases the critical coupling strength $K$ required to achieve long-term phase-locked solutions. This dependence is nonlinear, as shown in \Cref{fig:5}. 
For fixed $K=0.02$, the width of the phase distribution increases with $w$ until the system
{undergoes a numerically observed transition near }
$w=1.7$,
{beyond which}
the steady phase gaps
{no longer remain localized}
and solutions are no longer phase-locked; see \Cref{fig:5}B.

Thus, for weakly coupled oscillators, dead zones can have a substantial impact on synchronization dynamics, allowing for flexibility in the long-term phase distribution and, when large enough, preventing phase-locking entirely. 
Increasing the coupling strength $K$ shifts 
{this numerically observed transition,}
allowing 
long-term phase-locked solutions 
{to persist even for}
large $w$; see \Cref{fig:5}C-D. Interestingly, strong enough coupling eventually masks the dead zone's effects.

Earlier, the fixed-point equations and locked-frequency identity showed that the heterogeneous receiver gated model depends sensitively on the phase profile through the gate values $S(\vartheta_i)$. 
Figure~\ref{fig:4} demonstrates the dynamical consequence of this dependence; as the dead zone widens, the network can move from strong coherent locking to broader phase separation and, eventually, to incoherence.

To examine the robustness of our results, we tested the sensitivity of the dynamics to $k$ and $N$ which were fixed in the numerical simulations above. The parameter $k$ determines the steepness of the gating function at transitions into and out of the dead zone; see \Cref{fig:S1} Panel (A). We found that the effect of the dead zone on model dynamics is generally robust to variations in $k>0.5$. In \Cref{fig:S1} we show numerically that the dependence of steady phase gaps on $w$ is identical or nearly identical for a wide range of $k$. Small values of $k$ (e.g., $k<0.5$) significantly restrict inactive windows and don't create much of a dead zone as shown in \Cref{fig:S1} Panel (A). We found that the dependence of model dynamics on $w$ remains robust to increasing $k$. Thus, in the parameter space near $k=10$ used throughout the paper, $k$ is not an important determinant of long-term dynamics. Similarly, we found that the dynamics did not depend meaningfully on large enough $N$, as in the classical Kuramoto model. \Cref{fig:S1} Panel (C) compares the dynamics of $N=20$ oscillators with $N=200$, which are qualitatively alike. Over a wide range of $N$ (see \Cref{fig:S1} Panel (D)), the dependence of the model dynamics on $w$ remains consistent.

\section{Discussion}
In this paper, we introduced a receiver-gated Kuramoto model in which the effective coupling is modulated by the phase of the receiving oscillator through a smooth dead-zone gate. 
This provides {a biologically motivated extension of} 
the classical {all-to-all} Kuramoto system 
{in which interactions are shaped not only by phase differences, but also by phase-dependent receptivity.}
{A central contribution of the model is that it preserves enough analytical structure to permit rigorous study, while at the same time generating collective behaviors that differ qualitatively from those of the classical system.}

{Our analytical results in the identical-frequency regime show that receiver gating modifies the dynamics without altering the basic synchronization mechanism.}
In particular, {the model retains a Lyapunov-type structure, and because the gate remains strictly positive for finite $k$, the classical critical set of}
Kuramoto potentials is preserved. 
{Thus, the receiver gate changes the rate of descent toward equilibrium rather than the equilibrium set itself.
The local stability analysis further shows that, after removing the neutral rotational mode, the transverse decay rate near synchrony is proportional to $K S(\theta^\ast)$.
In the identical-oscillator setting, synchrony remains the distinguished stable coherent state, but convergence toward synchrony can become substantially slower when the synchronized phase lies in a weakly receptive portion of the cycle. }


The heterogeneous-frequency case reveals the deeper structural effect of receiver gating.
{At the level of the model equations, the locked-frequency relation is no longer determined solely by the average intrinsic frequency.}
Instead, {because the interaction weight depends on the receiver, the usual pairwise cancellation underlying the classical Kuramoto balance law breaks down, and the common locked frequency acquires an explicit dependence on the locked phase configuration.}
{This is an exact consequence of the receiver-gated coupling law and represents one of the main mathematical differences between the present model and the standard heterogeneous Kuramoto system.}

A second {analytical contribution} 
in the heterogeneous setting is the {local coherent branch obtained for sufficiently small frequency heterogeneity.}
Near synchrony, we showed that a unique smooth branch of phase-locked solutions persists, and that the leading-order phase offsets are ordered by the intrinsic frequency deviations. 
This result provides a local mathematical description of how phase separation emerges from the synchronized state. 
In particular, the leading-order scaling shows that the width of the locked phase profile is amplified by the factor $1/S(\theta^\ast)$. Thus, even close to synchrony, the receiver gate directly controls how intrinsic heterogeneity is converted into persistent phase dispersion.

The numerical results complement these analytical findings by revealing how the coherent states are organized away from the perturbative regime. In the identical-frequency case, the simulations confirm that increasing the dead zone width slows the transient approach to synchrony, consistent with the linearized decay rate. In the heterogeneous case, the numerics reveal richer dynamics. For small or moderate dead zones, stable phase-separated profiles are shaped jointly by intrinsic heterogeneity and the receiver gate, so that coherent collective behavior may take the form of an organized but dispersed locked state rather than a tightly clustered one.

For sufficiently large dead zones in the parameter regimes explored here, the numerical Poincar\'e sections indicated a transition from localized phase-locked behavior to phase drifting. We emphasize that this last conclusion is based on numerical evidence rather than a formal bifurcation theorem. Nevertheless, the computations strongly suggest that receiver gating can do more than slow synchronization or broaden a locked state--it can also destabilize coherent locking when phase-dependent receptivity becomes sufficiently weak over a large portion of the cycle.

Several limitations should be noted. We considered only all-to-all coupling and a smooth phenomenological gate, whereas biological oscillator populations may have more complex connectivity and phase-response structure. 
In addition, the model is not calibrated to experimentally measured PRCs or sensitivity functions so the biological interpretation remains generic. 

Biologically observed phase heterogeneity is thought to enable flexible adaptation of biological rhythms to external cues and is influenced by heterogeneity in individual cellular properties \cite{merrow2020functional}. Our study suggests that this heterogeneity can also be modulated by phase-dependent gating mechanisms in oscillator coupling, driving further heterogeneity in observed period.

Overall, the receiver-gated Kuramoto model shows that incorporating phase-dependent receptivity into oscillator coupling can preserve classical synchronization structure in homogeneous population while producing qualitatively new coherent states in heterogeneous ones. 
In particular, it provides a simple mechanistic framework in which phase locking, phase separation, and dead-zone-induced phase drifiting can all arise from the same biologically motivated modification of the coupling law. 

\appendix

\begin{appendices}
\section{Model parameters and sensitivity}

\begin{table}[H]
\centering
\caption{Parameters and values used throughout the paper.}
\begin{tabular}{lll}
\hline
Parameter  & Value(s)                                             & Description                           \\ \hline
$N$        & $20$                                                 & Number of oscillators                 \\
$K$        & $0.02,\; 0.2,\; [0,0.5]$                           & Coupling strength                     \\
$w$        & $(0,2\pi)$                                         & Dead zone width                       \\ 
$k$        & $10,\; [0,10]$                                       & Gating function steepness             \\
$\omega_i$ & $\frac{2\pi}{24},\; \frac{2\pi}{\left(23+\frac{2(i-1)}{N-1}\right)}$ & Intrinsic frequency of oscillator $i$
\end{tabular}
\label{table:S1}
\end{table}

\setcounter{figure}{0} 
\renewcommand{\thefigure}{A\arabic{figure}}

\begin{figure}[H]
    \centering
    \includegraphics[width=\linewidth]{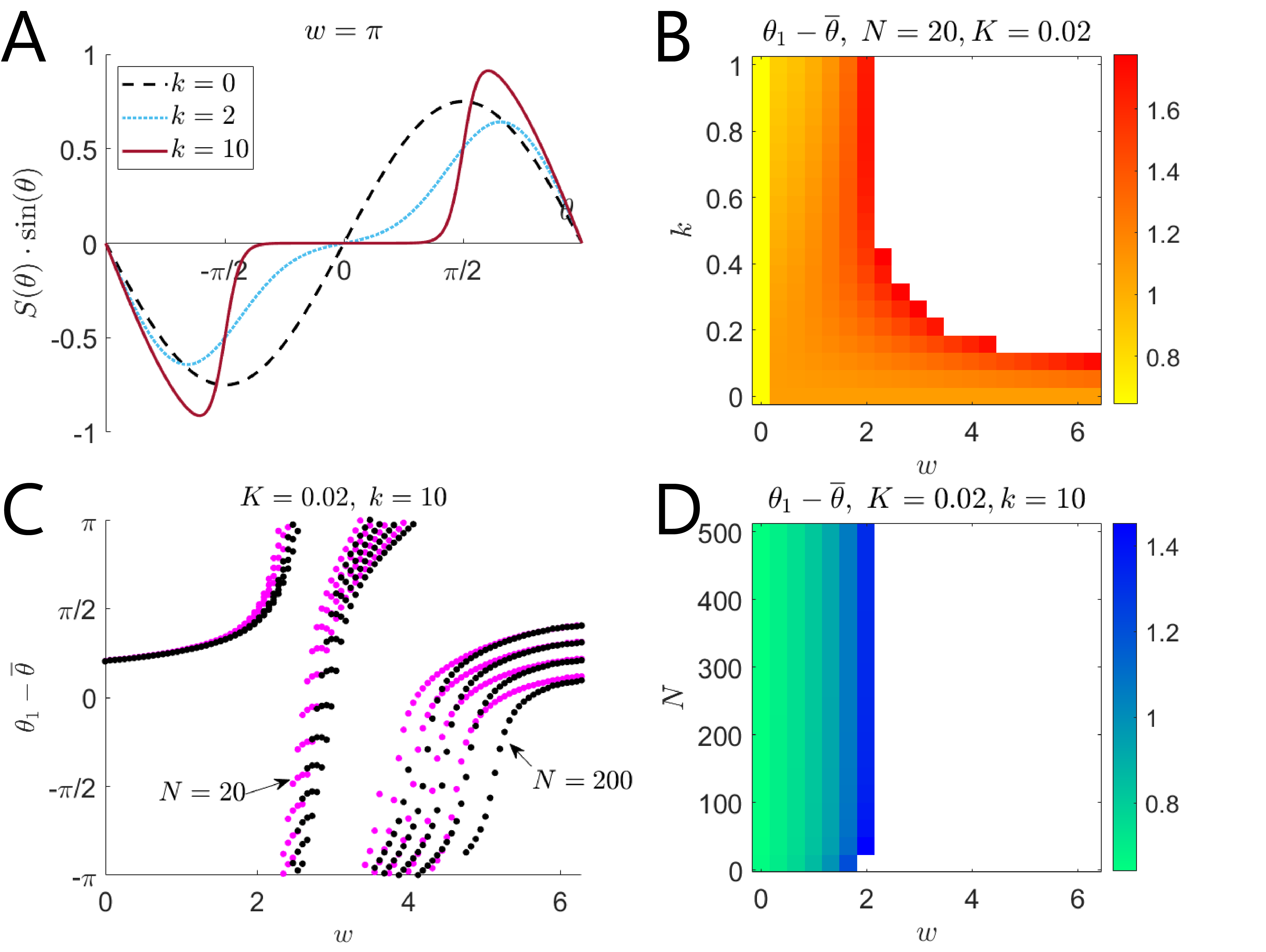}
    \caption{\textbf{Robustness of model behavior to variation in $k$ and $N$.} (A) Slice of receiver-gated coupling function for gating function steepness $k=0$, $k=2$, and $k=10$. (B) Dependence of steady phase gaps $\theta_1-\overline{\theta}$ on dead zone width $w$ is robust to changes in $k>0.5$. We do not consider gating with small $k$ as effective dead zones. (C) Numerical poincar\'{e} sections of the phase gap $\theta_1-\overline{\theta}$ sampled at $\overline{\theta}=\pi$ are qualitatively close for $N=20$ (pink) and $N=200$ (black). (D) Dependence of steady $\theta_1-\overline{\theta}$ on dead zone width $w$ is generally robust to the number of oscillators $N$.}
    \label{fig:S1}
\end{figure}
\end{appendices}

\section*{Acknowledgments}
The authors would like to thank Victoria Booth for carefully reading the manuscript and providing helpful comments. N.A. was supported by NIH/NCCIH grant R01AT013188. {R.K. was supported by NIH grant T90 DE034663. 

\section*{Author Contributions}
All authors have made substantial intellectual contributions to the study conception, 
execution, and design of the work. All authors have read and approved the final manuscript.

\section*{Access to Code}
All numerical solutions presented throughout this paper were computed using MATLAB's ode45 function with code made publicly available through our \href{https://github.com/rubyshkim/AkhavanKim_DeadZone}{Github repository} upon acceptance of the manuscript.

\bibliographystyle{ieeetr}
\bibliography{References.bib}

\end{document}